\newtheorem{thm}{Theorem}
\newtheorem{lemma-definition}[thm]{Lemma-Definition}
\newtheorem{cor}[thm]{Corollary}
\theoremstyle{remark}
\begin{document}
\numberwithin{thm}{section}
\title[Compactifying algebraic spaces]
{Compactifying normal algebraic spaces}
\author{Dan Edidin}
\address{
Department of Mathematics,
University of Missouri,
Columbia, MO 65211
}
\email{edidin@math.missouri.edu}

\begin{abstract}
The author wrote this note after being asked 
about the existence of compactifications of algebraic spaces.
Subsequent to posting the article to the math arXiv, the author
learned from Yutakaa Matsuura that the results of this paper
had been proved by Raoult in his 1971 paper \cite{Rao:71} using
the same techniques.
Since Raoult's article may be unknown to those working
in the field, the author is keeping this preprint on the arXiv
server. However, he makes no claim of originality.
\end{abstract}

\maketitle
A classic theorem of Nagata \cite{Nag:62, Nag:63} states than any variety
may be embedded into a complete scheme.
Nagata's proof was translated to the language of schemes
by Deligne\footnote{See \cite{Con:07} or \cite{Voj:07} for an exposition
of Deligne's argument. An independent scheme-theoretic proof
was given by L\"utkebohmert \cite{Lut:93}.}.  It is now known
that any separated scheme of finite type over a quasi-compact and quasi-separated
base scheme may be embedded as an open subscheme in a scheme which is proper over
the base \cite[Theorem 4.1]{Con:07}. 

Nagata's completion result is an important technical tool in a
number of contexts; for example it is used in the construction
of higher direct images in \'etale cohomology with compact support (\cite{Mil:80}).
A natural problem
is to determine whether separated algebraic spaces and, more generally,
separated Deligne-Mumford stacks admit compactifications. 

Unfortunately
the essential idea used in all proofs of  Nagata's theorem 
is not available in the category of algebraic 
spaces. The point is that any scheme admits a cover by Zariski open
subschemes which are quasi-projective over the ground scheme.
The open sets in the cover admit obvious compactifications
and  a global compactification may be constructed via a delicate
gluing process. Because algebraic spaces and Deligne-Mumford
stacks are only \'etale locally quasi-projective schemes, it is not
apparent how to carry over the compactification strategy used
for schemes.

The purpose of this note is to show that using Galois descent it is possible
to give an easy proof that normal algebraic spaces admit compactifications.

\begin{thm} \label{thm.easycompact}
Let $X$ be a normal algebraic space which is separated and of finite type
over a Noetherian scheme $S$. Then there is an algebraic
space $\overline{X}$ which is proper over $S$ and contains $X$
as a dense open subspace.
\end{thm}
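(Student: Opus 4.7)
The plan is to use Galois descent to reduce Theorem~\ref{thm.easycompact} to Nagata's theorem for schemes. The strategy has three stages: construct a Galois cover of $X$ by a scheme, equivariantly compactify it, and descend the compactification.

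The first and hardest stage is to produce a finite surjective $S$-morphism $p\colon Y \to X$ with $Y$ a \emph{scheme} equipped with a faithful action of a finite group $G$ over $X$ identifying $X$ with the quotient $Y/G$. Working component by component, I would assume $X$ is irreducible with function field $K$. Pick an \'etale chart $U \to X$ with $U$ a connected normal affine scheme, so the induced extension $K \hookrightarrow K(U)$ is finite and separable. Enlarge it to a finite Galois extension $L/K$ with group $G$, and let $Y$ be the normalization of $X$ in $L$. Since $X$ is Noetherian and normal, $Y \to X$ is finite, $G$ acts on $Y$ over $X$, and $Y/G = X$; the essential and nontrivial point --- the main obstacle of the argument --- is that $Y$ is actually a scheme, and this is precisely where normality of $X$ plays a critical role.

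Next, I would apply Nagata's theorem to the separated, finite-type $S$-scheme $Y$ to obtain an open immersion $Y \hookrightarrow \overline{Y}_0$ with $\overline{Y}_0$ proper over $S$. To upgrade this to a $G$-equivariant compactification, take $\overline{Y}$ to be the scheme-theoretic closure of the diagonal
\[
Y \hookrightarrow \prod_{g \in G} \overline{Y}_0, \qquad y \mapsto (g\cdot y)_{g \in G}.
\]
Permutation of factors defines a $G$-action on the product that preserves $\overline{Y}$ and extends the action on $Y$, so $\overline{Y}$ is an $S$-proper scheme carrying a $G$-action and containing $Y$ as a $G$-stable open dense subscheme.

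Finally, since $\overline{Y}$ is a Noetherian scheme with an action of the finite group $G$, the quotient $\overline{X} := \overline{Y}/G$ exists as a proper algebraic space over $S$. Galois descent gives $Y/G = X$, so the open dense immersion $Y \hookrightarrow \overline{Y}$ descends to an open dense immersion $X \hookrightarrow \overline{X}$, yielding the desired compactification. Once the scheme-property of $Y$ is in hand, the last two stages are essentially formal; that first step is where any real difficulty resides.
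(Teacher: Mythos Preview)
Your proposal is correct and follows essentially the same route as the paper: the paper invokes \cite[Corollaire 16.6.2]{LMB:00} to obtain the finite Galois scheme cover $Y\to X$ (the step you sketch via normalization in a Galois closure and correctly flag as the crux), then applies Nagata and the same Sumihiro product-and-closure trick to get an equivariant compactification, and finally passes to the quotient. The only difference is cosmetic---the paper outsources your ``first stage'' to the cited result rather than outlining its construction.
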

\begin{proof}
Since $X$ is a normal algebraic space it is generically a normal scheme
and hence has a function field $K(X)$. Also, $X$ is Noetherian as it is
of finite type over the Noetherian scheme $S$.
By \cite[Corollaire 16.6.2 ]{LMB:00} the normal algebraic space $X$ is a geometric quotient
$Y/\Gamma$ where $Y$ is a normal scheme and $\Gamma = Gal(K(Y)/K(X))$ is finite.
By Nagata's theorem the scheme $Y$ has a compactification  $\overline{Y}$ which 
is proper over $S$ (and hence Noetherian).
The following construction (which we learned from Sumihiro's paper \cite{Sum:74})
allows us to replace $\overline{Y}$ by a $\Gamma$-equivariant compactification.

Enumerate the elements of $\Gamma$ as $\gamma_1 = e, \ldots ,
\gamma_n$ and define a pairing $l \colon [1,n] \times [1,n] \to [1,n]$
by $\gamma_i \gamma_j = \gamma_{l(i,j)}$.  Let $\Gamma$ act on the $n$-fold product over $S$,
$\overline{Y}^n$, by the rule $\gamma_i(y_1, \ldots , y_n) =
(y_{l(1,i)} , \ldots , y_{l(n,i)})$. 
With our chosen $\Gamma$ action the embedding $s \colon Y \to
\overline{Y}^n$,
$z \mapsto (\gamma_1 z, \ldots, \gamma_n z)$ is $\Gamma$-equivariant.
Let $\overline{W}$ be the closure of the scheme theoretic image of $s$.
Since $s$ is $\Gamma$-equivariant, there is an action of $\Gamma$ on $\overline{W}$
which extends the action on the dense open subscheme $Y$. 
Moreover, $\overline{W}$ is proper over $S$ (and hence Noetherian)
since it is a closed subscheme of the proper $S$-scheme $\overline{Y}^n$.

Let $\overline{X}$ be the geometric quotient of $\overline{W}$ by the action of
$\Gamma$ (such quotients always exist in the category of algebraic spaces).
Since $\overline{W} \to \overline{X}$ is finite, the algebraic space $\overline{X}$
is proper over $S$ and contains the quotient $X = Y/\Gamma$ as a dense open subspace.
\end{proof}

As a corollary we obtain the following compactification result
for separated morphisms of algebraic spaces over a Noetherian base scheme.
\begin{cor}
Let $X \stackrel{f} \to Y$ be a morphism of algebraic spaces
which is separated and of finite type.
Assume that $Y$ (and hence $X$)
is separated and of finite type over a Noetherian base scheme $S$ and that $X$ is normal.
Then there exists
an algebraic space $\overline{X}$ which is proper over $Y$ and contains $X$
as a dense open subspace.
\end{cor}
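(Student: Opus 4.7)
The plan is to reduce the relative compactification problem over $Y$ to the absolute one over $S$, apply Theorem \ref{thm.easycompact} there, and then cut back down using the graph of $f$.

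First I would observe that since $f$ is separated and of finite type and $Y$ is separated and of finite type over $S$, the composition $X \to S$ is separated and of finite type. Together with the hypothesis that $X$ is normal, this puts us exactly in the setting of Theorem \ref{thm.easycompact}, which supplies an algebraic space $\overline{X}^S$ proper over $S$ that contains $X$ as a dense open subspace. This is the non-trivial input.

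Next, since $f$ is separated, the graph morphism $\Gamma_f \colon X \to X \times_S Y$ is a closed immersion. Composing with the open immersion $X \times_S Y \hookrightarrow \overline{X}^S \times_S Y$ obtained by base change of $X \hookrightarrow \overline{X}^S$ along $Y \to S$, I get an immersion $i \colon X \to \overline{X}^S \times_S Y$ which factors as closed-inside-open. I would then take $\overline{X}$ to be the scheme-theoretic closure of $i(X)$ in $\overline{X}^S \times_S Y$. By construction $X$ sits inside $\overline{X}$ as a dense subspace, and it is open there because $X$ is the intersection of the open subspace $X \times_S Y$ with this closure. Finally, the projection $\overline{X}^S \times_S Y \to Y$ is proper as a base change of $\overline{X}^S \to S$, and properness passes to closed subspaces, so $\overline{X} \to Y$ is proper as required.

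All the auxiliary ingredients---closedness of the graph of a separated morphism, stability of open immersions and properness under base change, and the existence of scheme-theoretic closures---are standard in the category of algebraic spaces. I do not expect any serious obstacle: the real content is absorbed into Theorem \ref{thm.easycompact}, and the corollary should be essentially a formal consequence.
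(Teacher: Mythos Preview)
Your argument is correct and follows exactly the paper's approach: apply Theorem~\ref{thm.easycompact} to get an $S$-compactification $\tilde X$, then take the closure of the graph of $f$ inside $\tilde X \times_S Y$, which is proper over $Y$ by base change. One small slip: the graph $\Gamma_f \colon X \to X \times_S Y$ is a closed immersion because $Y \to S$ is separated (it is the base change of $\Delta_{Y/S}$), not because $f$ is separated---though both hypotheses are present, so the conclusion stands.
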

\begin{proof}
By Theorem \ref{thm.easycompact} we know that the normal algebraic space
$X$ has an $S$-compactification $\tilde{X}$. 
The map $\tilde{X} \times_S Y \to Y$ obtained by base change is proper
and contains $X \times_S Y$ as a dense open subspace. Let $\overline{X}$
be the closure (in the sense of algebraic spaces) of the graph of $f$ in 
$\tilde{X} \times_S Y$.
\end{proof}

{\bf Acknowledgement:} The author thanks Martin Olsson for suggesting to him the problem
of compactifying algebraic spaces.

\bibliographystyle{amsmath}
\def\cprime{$'$}

\end{document}